\documentclass[10pt]{amsart}
\usepackage{graphicx}
\usepackage{latexsym}
\usepackage{fancyhdr}
\usepackage{amsmath, amssymb}
 \usepackage[utf8]{inputenc}
\usepackage[all]{xy}
\usepackage{hyperref}
\usepackage{subfigure}
\usepackage{enumerate}
\usepackage{color}

\theoremstyle{plain}
\newtheorem{thm}{Theorem}%[section]
\newtheorem{cor}[thm]{Corollary}

\newtheorem{lem}[thm]{Lemma} 
\newtheorem{prop}[thm]{Proposition}

\newtheorem{question}[thm]{Question}

\theoremstyle{definition}
\newtheorem{defi}[thm]{Definition}
\newtheorem{rem}[thm]{Remark}
\numberwithin{equation}{section}

\newtheorem{example}[thm]{Example}

\newcommand{\lgw}{\longrightarrow}

\newcommand{\ovl}{\overline}

\newcommand{\ord}{\text{ord}}

\renewcommand{\k}{\Bbbk}

\newcommand{\R}{\mathbb{R}}
\newcommand{\K}{\mathbb{K}}

\newcommand{\N}{\mathbb{N}}

\renewcommand{\L}{\mathbb{L}}

\newcommand{\C}{\mathbb{C}}

\newcommand{\Q}{\mathbb{Q}}

\renewcommand{\lg}{\langle}
\newcommand{\NN}{\aleph_0}
\newcommand{\rg}{\rangle}

\renewcommand{\a}{\alpha}

\renewcommand{\b}{\beta}

\renewcommand{\phi}{\varphi}

\newcommand{\e}{\varepsilon}

\newcommand{\mm}{\mathfrak{ m}}

\begin{document}
\title{Remarks on Artin approximation with constraints}

\author{Dorin Popescu}
\address{Simion Stoilow Institute of Mathematics of the Romanian Academy, Research unit 5,
University of Bucharest, P.O.Box 1-764, Bucharest 014700, Romania}
\email{dorin.popescu@imar.ro}

\author{Guillaume Rond}
\address{Aix-Marseille Universit\'e, CNRS, Centrale Marseille, I2M, UMR 7373, 13453 Marseille, France}
\email{guillaume.rond@univ-amu.fr}

\begin{abstract}
We study various approximation results of solutions of equations $f(x,Y)=0$ where $f(x,Y)\in\K[\![x]\!][Y]^r$ and $x$ and $Y$ are two sets of variables, and where some components of the solutions $y(x)\in\K[\![x]\!]^m$ do not depend on all the variables $x_j$. These problems were highlighted by M. Artin.
\end{abstract}

\subjclass[2010]{Primary: 13B40, Secondary:  	03C20 , 12L10,  13J05, 14B12}

\maketitle

\section{Introduction}

Let $(R,\mm)$ be a Henselian  excellent Noetherian local ring, $f=(f_1,\ldots,f_r)$ a system of polynomials in $Y=(Y_1,\ldots,Y_m)$ over $R$ and $\hat{y}$ a zero of $f$ in the completion $\hat{R}$ of $R$.
\begin{thm} (Popescu \cite{P}, \cite{P2},  Swan \cite{S})\label{po} For every $c\in \N$ there exists a zero $y$ of $f$ in $R$ such that $y\equiv \hat{y}$ modulo $\mm^c$.
\end{thm}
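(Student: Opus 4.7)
The plan is to deduce the theorem from the general Néron desingularization theorem, combined with the Henselian lifting property for smooth morphisms; this is Popescu's original strategy.

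First, since $R$ is excellent, the completion morphism $R\to\hat R$ is regular (this is part of the standard characterization of excellent rings via geometric regularity of the formal fibers). General Néron desingularization then asserts that every regular homomorphism between Noetherian rings is a filtered colimit of smooth finite-type algebras; applied here, it yields a presentation
$$\hat R=\varinjlim_{i\in I} B_i,$$
with each $B_i$ a smooth $R$-algebra of finite type.

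Next, I would interpret the given data algebraically. The system $f$ together with the formal zero $\hat y$ defines an $R$-algebra homomorphism $\varphi: A:=R[Y]/(f)\lgw\hat R$, $Y\mapsto\hat y$. Because $A$ is finitely presented over $R$, the universal property of the filtered colimit forces $\varphi$ to factor through one of the $B_i$: there exist $R$-algebra maps $\alpha: A\to B_i$ and $\psi: B_i\to\hat R$ with $\varphi=\psi\circ\alpha$. Composing $\psi$ with the projection $\hat R\to\hat R/\mm^c\hat R=R/\mm^c$ produces a morphism $\bar\psi: B_i\to R/\mm^c$ of $R$-algebras.

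To conclude, I would invoke the Henselian property of $R$. Since $B_i$ is smooth over $R$ and $R$ is Henselian, the standard lifting property of smooth algebras over Henselian local rings allows $\bar\psi$ to be lifted to an $R$-algebra morphism $\beta: B_i\to R$. Setting $y:=\beta(\alpha(Y))\in R^m$ then produces a zero of $f$ in $R$ with $y\equiv\hat y\pmod{\mm^c}$, which is exactly what the theorem asks for. The main obstacle in this approach is of course the invocation of general Néron desingularization, which is itself the deep result of Popescu (reworked and clarified by Swan); the reduction to a smooth $R$-algebra and the final Hensel-type lifting step are routine formalities once that theorem is in hand.
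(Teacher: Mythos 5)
Your argument is correct and is precisely the standard derivation given in the cited sources (Popescu, Swan): the paper itself states Theorem~\ref{po} without proof, quoting it from those references, and your reduction of the approximation property to General N\'eron Desingularization plus the Henselian lifting property for smooth algebras is exactly how it is proved there. No gaps: the factorization through some $B_i$ uses finite presentation of $R[Y]/(f)$, and the lift of $B_i\to R/\mm^c$ to $B_i\to R$ is the usual consequence of smoothness over a Henselian local ring.
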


M. Artin proved in \cite[Theorem 1.10]{Ar0} the most important case of this theorem, that is when $R$ is the algebraic power series ring in $x=(x_1,\ldots,x_n)$ over a field $\K$. Usually we rewrite  Theorem \ref{po} saying that excellent Henselian local rings have the Artin approximation property.

 Now suppose that $\hat{R}$ is the formal power series ring in $x=(x_1,\ldots,x_n)$ over a field $\K$ and some components of $\hat{y}$ have some constraints, that is they  depend  only on some of  the variables $x_j$. M. Artin asked if it is possible to find $y\in R^m$ such that the correspondent components depend on the same variables $x_j$  (see  \cite[Question 4]{Ar}). More precisely, we have the following question. For a set $J\subset [n]$ we denote by $\K[\![x_J]\!]$ the ring of formal power series in the $x_j$ for $j\in J$.
 
 \begin{question}\label{q}
 (Artin Approximation with constraints \cite[Problem 1, page 68]{R}) Let $R$ be an excellent local subring of $\K[\![x]\!]$, $x=(x_1,\ldots,x_n)$ such that the completion of $R$ is $\K[\![x]\!]$ and $f\in R[Y]^r$, $Y=(Y_1,\ldots,Y_m)$. Assume that there exists a formal solution $\hat{y}\in \K[\![x]\!]^m$ of $f=0$ such that $\hat{y}_i\in  \K[\![x_{J_i}]\!]$ for some subset $J_i\subset [n]$, $i\in [m]$. Is it possible to approximate $\hat{y}$  by a solution $y\in R^m$ of $f=0$ such that
$y_i\in R\cap \K[\![x_{J_i}]\!]$, $i\in [m]$?
 \end{question}
 
If $R$ is the algebraic power series ring in $x=(x_1,x_2,x_3)$ over $\C$ then Becker \cite{Be} gave a counterexample. If the set $(J_i)$ is totally ordered by inclusion, that is the so called  Nested   Artin Approximation then this question has a positive 
answer in \cite{P}, \cite[Corollary 3.7]{P2} (see also \cite[Theorem 3.1]{CPR} for an easy proof in the linear case). However, when $R$ is the convergent power series ring in  $x=(x_1,x_2,x_3)$ over $\C$ then Gabrielov \cite{Ga} gave a counterexample (see also \cite{Iz} for a general account on this problem).
 
 A field extension $\K\subset \K'$ is algebraically pure (see \cite{P1}, \cite{BNP})
if every finite system of polynomial equations has a solution in $\K$ if it has one in $\K'$. Any field extension of an algebraically closed field is algebraically pure \cite{P1}. In connection with Question  \ref{q} the following theorem was proved.

\begin{thm}(Kosar-Popescu \cite[Theorem 9]{KP})\label{t}
Let $\K \to \K'$ be an algebraically pure morphism of fields and $x=(x_1, \ldots , x_n)$. Let $J_i$, $i\in [m]$ be  subsets of $[n]$,  and $A_i=\K\langle x_{J_i}\rangle$, resp. $A'_i=\K'\langle x_{J_i}\rangle$, $i\in [m]$ be the algebraic power series in $x_{j_I}$ over $\K$ resp. $\K'$. Set
 $\mathcal{N}= A_1 \times \cdots \times A_m$ and
  $\mathcal{N'}=A'_1 \times \cdots \times A'_m$. Let $f$ be a system of polynomials from $\K\langle x\rangle [Y]$, $Y=(Y_1,\ldots,Y_m)$, and ${\hat y}\in \mathcal{N'}$,  such that $f({\hat y})=0$.
Then there exist $y\in \mathcal{N}$  such that $f(y)=0$ and   $\ord( y_i)=\ord (\hat y_i)$ for $i\in [m]$.
\end{thm}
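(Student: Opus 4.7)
The plan is to reduce the statement to a direct application of algebraic purity of $\K\subset\K'$. The strategy is to package the existence of the desired $y\in\mc N$ as the solvability over $\K$ of a \emph{finite} system of polynomial equations and inequations, whose solvability over $\K'$ is witnessed by the given $\hat y$; algebraic purity then descends the solution to $\K$, and the Henselian structure of $A_i=\K\langle x_{J_i}\rangle$ lets us reconstruct $y$ from it.

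First, by the Artin--Mazur representation of algebraic power series, each $\hat y_i\in\K'\langle x_{J_i}\rangle$ is the first coordinate of a tuple $\hat\zeta_i\in(\K'\langle x_{J_i}\rangle)^{N_i}$ uniquely characterized as the Henselian implicit solution of a polynomial system $P_i(x_{J_i},Z_i)=0$ with $P_i\in\K'[x_{J_i},Z_i]^{N_i}$ and $\det(\partial P_i/\partial Z_i)(0,\hat\zeta_i(0))\neq 0$. Similarly, the coefficients $a_{k,\alpha}\in\K\langle x\rangle$ of $f$ are components of an implicit Henselian solution $\hat\omega\in\K\langle x\rangle^L$ of a system $Q(x,W)=0$ with $Q\in\K[x,W]^L$; replacing the $a_{k,\alpha}$ by new indeterminates $T_{k,\alpha}$ reduces $f$ to a pure polynomial $\tilde f(x,T,Y)\in\K[x,T,Y]^r$. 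The coefficients of the $P_i$ form a tuple $\hat p\in(\K')^M$ and the initial values $\hat\zeta_i(0)$ a tuple $\hat c\in(\K')^{\sum N_i}$.

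Next, I would express every condition required of $y$ as a finite polynomial condition over $\K$ in unknowns $(p,c)$ playing the roles of $\hat p,\hat c$. The conditions $P_i(0,c)=0$ and $\det(\partial P_i/\partial Z_i)(0,c)\neq 0$ are directly polynomial in $(p,c)$. The order condition $\ord(y_i)=\ord(\hat y_i)$ amounts to the vanishing of finitely many initial Taylor coefficients of $\zeta_{i,1}$ together with the non-vanishing of the next one, and these coefficients are explicit polynomial functions of $(p,c)$ obtained by successive differentiation of $P_i$. Finally, the equation $f(y)=0$ translates, via the \'etale structure of the concatenated Henselian system $\Phi=(P_i,Q)$, to the existence of a cofactor identity expressing $\tilde f$ evaluated on the implicit solution as a linear combination of the $\Phi_j$ over $\K[p,c][x,W,Z]$, up to a denominator that is a unit at the origin; once an a priori degree bound on the cofactors is fixed, introducing their coefficients as auxiliary unknowns turns this into finitely many polynomial equations. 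Collecting all of these produces a finite polynomial system $\Sigma$ over $\K$.

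By construction, $\Sigma$ has a solution over $\K'$, namely $(\hat p,\hat c,\ldots)$. Algebraic purity of $\K\subset\K'$ then supplies a solution $(p,c,\ldots)$ over $\K$. The $\K$-polynomials $P_i$ associated with this solution admit, by Hensel's lemma applied in $A_i$, a unique implicit solution $\zeta_i\in A_i^{N_i}$ with $\zeta_i(0)=c_i$, and setting $y_i:=\zeta_{i,1}$ yields $y\in\mc N$ satisfying $f(y)=0$ and $\ord(y_i)=\ord(\hat y_i)$. The technical heart of the argument is the finitization of the power-series equation $f(y)=0$ in the previous step: one must bound the degrees of the cofactors a priori, independently of $\hat y$, so that the infinite identity in $\K'[[x]]$ becomes a finite polynomial condition to which algebraic purity can be applied. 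Establishing this uniform bound, typically via effective forms of Artin approximation or Nullstellensatz-type arguments in Henselian local rings, is the main obstacle of the proof.
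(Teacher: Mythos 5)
First, a point of orientation: the paper does not prove this theorem --- it is imported from Kosar--Popescu \cite{KP} --- so there is no internal proof to measure you against. The closest relative inside the paper, the proof of Theorem \ref{q2}, finitizes the condition $f(y)=0$ by listing \emph{all} Taylor coefficients as a countable polynomial system and invoking $\NN$-completeness of $\K$; that hypothesis is absent here, so that route is closed and your Artin--Mazur coding of each $\hat y_i$ by a Henselian system $P_i(x_{J_i},Z_i)=0$ is indeed the right replacement (it is also what makes the constraints come for free, since the reconstructed $\zeta_i$ automatically lies in $\K\langle x_{J_i}\rangle$). Your treatment of the order conditions and of inequations via auxiliary unknowns is correct.

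The proposal nevertheless has a genuine gap, one you flag yourself: the finitization of $f(y)=0$ is never carried out. You assert that it requires ``an a priori degree bound on the cofactors, independently of $\hat y$'' and then declare establishing that bound to be ``the main obstacle of the proof''; a proof that ends by naming its unsolved main obstacle is incomplete. The obstacle is, however, illusory, and you should see why: no bound uniform in $\hat y$ or in $(p,c)$ is needed. Because the concatenated system $\Phi=(Q,P_1,\ldots,P_m)$ has invertible Jacobian in $(W,Z)$ at the $\K'$-rational point $(0,\hat\omega(0),\hat c)$, the local ring $B=\bigl(\K'[x,W,Z]/(\Phi)\bigr)_{\mathfrak m'}$ is \'etale over $\K'[x]_{(x)}$, hence a regular local domain injecting into its completion $\K'[[x]]$ via $(W,Z)\mapsto(\hat\omega,\hat\zeta)$; therefore $\tilde f(\hat\omega,\hat\zeta)=0$ forces one concrete identity $\hat u\,\tilde f=\sum_j\hat g_j\Phi_j$ in $\K'[x,W,Z]$ with $\hat u(0,\hat\omega(0),\hat c)\neq 0$. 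Read off $D:=\max(\deg\hat u,\deg\hat g_j)$ from this particular witness and hard-code $D$ into $\Sigma$: the system then has the $\K'$-solution $(\hat p,\hat c,\hat u,\hat g,\ldots)$ by construction, while the converse implication (a $\K$-solution of $\Sigma$ yields $f(y)=0$, because $u(x,\omega,\zeta)$ is then a unit of $\K[[x]]$) holds for every value of $D$. No effective Nullstellensatz or uniform Artin bound enters. With that step written out, your argument closes and is in the spirit of the arguments of \cite{KP} and \cite{BNP}.
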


The goal of our paper is to replace somehow in Theorem \ref{t}
the algebraic power series by formal power series (see Theorem \ref{q2}) and to state a certain Artin strong approximation with constraints property of the formal power series ring in $x$ over a field $\K$ which is so-called $\NN$-complete (see Corollary \ref{cor}).
This condition on $\K$ is necessary (see Remarks \ref{r1}, \ref{r2}). Finally we apply these results to extend approximation results due to J. Denef and L. Lipshitz for differential equations with coefficients in the ring of univariate polynomials to the case of several indeterminates (see Corollaries \ref{cor1} and \ref{cor2}).

 Finite fields, uncountable algebraically closed fields and ultraproducts of fields over $\N$ are $\NN$-complete (see Theorem \ref{t0}). If $(\K_n)_n$ is a sequence of fields and $\mathcal F$ is an ultrafilter of $\N$ we denote by $(\K_n)^*$ the ultraproduct (over the natural numbers) defined as $\left(\prod_{n\in\N}\K_n\right)/\mathcal F$, that is the factor of $\left(\prod_{n\in\N}\K_n\right)$ by the ideal $\{(x_n)_{n\in \N}\in \left(\prod_{n\in\N}\K_n\right) :\{n\in \N:x_n=0\}\in \mathcal{F}\}$. When $\K$ is a single field, $\K^*$ denotes the ultrapower   $\left(\prod_{n\in\N}\K\right)/\mathcal F$.
%%%%%%%%%%%%%%%%%%%%%%%%%%%%%%%%%%%%%%%%

\section{Solutions of countable systems of polynomial equations}

\begin{defi}
Let $\K$ be a field. We say that $\K$ is \emph{$\NN$-complete} if every countable system $\mathcal S$ of polynomial equations (in a countable number of indeterminates) has a solution in $\K$ if and only if every finite sub-system of $\mathcal S$ has a solution in $\K$.
\end{defi}

\begin{thm}\label{t0}
The following fields are $\NN$-complete:
\begin{enumerate}
\item[a)] Every finite field. 
\item[b)] Every uncountable algebraically closed field.
\item[c)] Every ultraproduct of fields over the natural numbers.
\end{enumerate}
\end{thm}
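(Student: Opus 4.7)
I would treat the three cases separately, each via a different standard tool.

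\emph{Case (a).} I would argue by compactness. Since $\K$ is finite and discrete, $\K^{\N}$ is compact by Tychonoff. Each polynomial equation $P=0$ depends on only finitely many indeterminates, so its solution set is a closed cylinder in $\K^{\N}$. The assumption that every finite subsystem has a solution is the finite intersection property for this family of closed sets, and compactness delivers a common solution.

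\emph{Case (b).} Let $I\subset\K[X_1,X_2,\ldots]$ be the ideal generated by the polynomials appearing in $\mathcal S$. By Hilbert's Nullstellensatz applied inside each $\K[X_1,\ldots,X_N]$, the finite-subsystem hypothesis implies $1\notin I$; choose a maximal ideal $\mm\supset I$ and set $L=\K[X_1,X_2,\ldots]/\mm$. Since the monomials form a countable $\K$-basis of $\K[X_1,X_2,\ldots]$, we have $\dim_\K L\le\NN$. If $L$ contained a transcendental element $t$ over $\K$, the family $\{(t-a)^{-1}:a\in\K\}\subset L$ would be $\K$-linearly independent (the standard partial-fractions computation in $\K(t)$), forcing $\dim_\K L\ge|\K|>\NN$, a contradiction. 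Hence $L$ is algebraic over the algebraically closed field $\K$, so $L=\K$, and the residues of the $X_j$ modulo $\mm$ give a solution of $\mathcal S$. This cardinality step is the main obstacle and precisely the point where uncountability is indispensable, matching the necessity remarks announced later in the paper.

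\emph{Case (c).} For an ultraproduct $\K=(\K_n)^*$, I would invoke the $\aleph_1$-saturation of countable ultraproducts, a classical consequence of \L{}o\'s's theorem. After padding variables, we may assume $P_i$ involves only $X_1,\ldots,X_i$. I would build a solution coordinate by coordinate: given $a_1,\ldots,a_j\in\K$ such that the tuple $(a_1,\ldots,a_j)$ extends to a solution of every initial subsystem $\{P_1=\cdots=P_n=0\}$ in $\K$ for each $n\ge j$, the countable $1$-type in $X_{j+1}$ asserting ``$(a_1,\ldots,a_j,X_{j+1})$ extends to a solution of the first $n$ equations'' is finitely satisfiable over the parameters $a_1,\ldots,a_j$; by $\aleph_1$-saturation some $a_{j+1}\in\K$ realizes it, preserving the inductive hypothesis. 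Iterating, the resulting sequence $(a_j)_{j\in\N}\in\K^{\N}$ is a solution of $\mathcal S$, and the base case $j=0$ is exactly the assumption on $\mathcal S$.
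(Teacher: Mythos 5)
Your proof is correct, but it follows a genuinely different route from the paper's in parts (b) and (c). The paper sets up one uniform scheme for all three cases: it forms the projections $C_N^k=\pi_k(V_N)$ of the solution sets of the first $N$ equations onto the first $k$ coordinates, shows that a solution of $\mathcal S$ can be assembled coordinate by coordinate as soon as each $C^k=\cap_N C_N^k$ is nonempty, and then proves nonemptiness case by case. For (a) this is the stabilization of a decreasing sequence of finite sets, which is just an explicit form of your Tychonoff argument, so there you are essentially doing the same thing. For (b) the paper argues geometrically: by Chevalley's theorem each $C_N^k$ is constructible, the Zariski closures stabilize by Noetherianity, and an irreducible component of the stable closure cannot be covered by countably many proper closed subsets when $\K$ is uncountable. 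Your maximal-ideal argument in $\K[X_1,X_2,\ldots]$ --- $1\notin I$ from the finite-subsystem hypothesis, $\dim_\K L\le\NN$ from the countable monomial basis, and the $(t-a)^{-1}$ linear-independence trick to exclude transcendentals --- is the classical cardinality proof of the weak Nullstellensatz pushed to countably many variables; it is more algebraic and self-contained (no Chevalley, no constructible sets), while the paper's version makes the geometric mechanism of where uncountability enters more visible. For (c) the paper simply cites Lemma 2.17 of \cite{P1}, whereas you invoke $\aleph_1$-saturation of ultraproducts over countably incomplete ultrafilters and realize the countably many coordinates one type at a time; this is precisely the alternative route the authors themselves flag in the remark following the theorem. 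One small caveat for (c): saturation (and indeed the statement itself, as the paper's remark that ultraproducts are finite or uncountable shows) requires the ultrafilter on $\N$ to be nonprincipal, an assumption both you and the paper leave implicit.
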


\begin{rem}
Every ultraproduct is either finite or uncountable. So every algebraically closed field which is an ultraproduct is necessarily uncountable.
\end{rem}

\begin{proof}
Let $\mathcal S$ be a system of countably many polynomial equations with coefficients in a field $\K$. We list the polynomial equations of $\mathcal S$ as $P_1,\ldots, P_n,\ldots$ which depends on the variables $x_1,\ldots, x_l,\ldots$.\\
For any $N\in\N$ let $D_N$ be an integer  such that the polynomials $P_i$, for $i\leq N$, depend only on the $x_j$ for $j\leq  D_N$.\\
Let us define the canonical projection maps:
$$\pi_{l,k} : \K^l=\K^{k}\times \K^{l-k}\lgw \K^k \ \ \forall l\geq k\geq 1$$
that sends the vector $(x_1,\ldots, x_l)$ onto $(x_1,\ldots, x_k)$. We also define the projection maps
$$\pi_k :\K^\N\lgw \K^k\ \ \ \forall k\geq 1$$
that send the sequence $(x_1,\ldots, x_n,\ldots)$ onto $(x_1,\ldots, x_k)$.\\
Let 
$$V_\infty:=\{x=(x_n)_n\in\K^\N\mid P_i(x)=0\ \forall i\in\N\}$$
and 
$$V_{N}:=\{x=(x_n)_n\in\K^\N\mid P_1(x)=\ldots=P_N(x)=0\} \ \ \forall N\in\N.$$
Then we have that $V_\infty=\cap_{N\in\N}V_N$.
By assumption, for every integer $N\geq 1$ we have that
$$V_N=\pi_{D_N}(V_N)\times\K^{\N\setminus \{1,\ldots,D_N\}}.$$
For every positive integers $N$ and $k$ we define
$$C_N^k=\pi_{k}(V_N).$$
Now set 
$$C^k:=\cap_{N\in\N}C_N^k.$$ We claim that, if for every $k$, $C_k\neq \emptyset$, then $\mathcal S$ has a solution; indeed, by construction $(x_1,\ldots, x_k)\in C^k$ if and only if for every $N$ and $k$ there exists $(x_{k+1},\ldots, )\in\K^\N$ such that $(x_1,\ldots, x_k,x_{k+1},\ldots)\in V_N$. In particular $\pi_{k+1,k}(C^{k+1})=C^k$ for every $k$.\\
Now let $x_1\in C^1$. Then there exists $x_2\in\K$ such that $(x_1,x_2)\in C^2$. By induction we can find a sequence of elements $x_n\in\K$ such that for every $k$ 
$$(x_1,\ldots, x_k)\in C^k.$$
Thus the sequence $x=(x_n)_n\in V_N$ for every $N$ so it belongs to $V_\infty$. Hence $\mathcal S$ has a solution.\\
\\
a) Let us assume that $\K$ is a finite field.\\
Then the $C_N^k$ are finite subsets of $\K^k$. Since $V_{N+1}\subset V_N$ for every $N$, the sequence $(C_N^k)_N$ is decreasing so it stabilizes. Therefore $C_k\neq \emptyset$  and $\mathcal S$ has a solution.\\
\\b) Now let us assume that $\K$ is an uncountable algebraically closed field. 
We have that 
$$C_N^k=\pi_k(V_N)=\pi_{D_N,k}\left(\{x=(x_1,\ldots, x_{D_N})\in\K^{D_N}\mid P_1(x)=\ldots=P_N(x)=0\}\right).$$
Thus the $C_N^k$ are constructible subsets of $\K^k$ since $\K$ is algebraically closed (by Chevalley's Theorem). Let us recall that a constructible set is a finite union of sets of the form $X\backslash Y$ where $X$ and $Y$ are   Zariski closed  subsets of $\K^{k}$.\\
Thus the sequence $(C_N^k)_N$ is a decreasing sequence of constructible subsets of $\K^k$. Let $F_N^k$ denote the Zariski closure of $C_N^k$. Then the sequence $(F_N^k)_N$ is a decreasing sequence of Zariski closed subsets of $\K^k$. By Noetherianity this sequence stabilizes, i.e. $F_{N}^k=F_{N_0}^k$ for every $N\geq N_0$ and some positive integer $N_0$. By assumption $C_{N_0}^k\neq \emptyset$  so $F_{N_0}^k\neq \emptyset$. Let $F$ be an irreducible component of $F_{N_0}^k$. \\
Since $C_N^k$ is constructible,  $C_N^k=\cup_i\left(X^N_i\backslash Y^N_i\right)$ for a finite number of Zariski closed sets $X^N_i$ and $Y^N_i$ with $X^N_i\backslash Y^N_i\neq \emptyset$ and $X^N_i$ is assumed irreducible. Since $X_i^N$ is irreducible the Zariski closure of $X^N_i\backslash Y^N_i$ is $X_i^N$. Therefore for $N\geq N_0$ we have that
$$F_{N_0}^k=F_N^k=\cup_i X^N_i.$$
 But $F$ being irreducible, for every $N\geq N_0$ one of the $X_i^N$ has to be equal to $F$. Thus for every $N\geq N_0$ there exists a closed proper subset $Y_N\subset F$ such that
$$F\backslash Y_N\subset C_N^k\ \ \forall N\geq N_0.$$
Since $\K$ is uncountable 
$$\bigcup_{N\geq N_0}Y_N\subsetneq F.$$
This is a well known fact (see for instance Exercice 5.10, \cite{L} p. 76). This implies that $C^k\neq \emptyset$
 and $\mathcal S$ has a solution.\\
\\
Finally c) is given as in Lemma 2.17 \cite{P1}.
\hfill\ \end{proof}

\begin{rem}
It is quite straightforward to prove that a field $\K$ that is $\aleph_1$-saturated  is $\NN$-complete (for the definition of a saturated model see \cite[Section 2.3]{CK}). One can prove that the three fields of Theorem \ref{t0} are $\aleph_1$-saturated providing an alternative proof of the fact that these fields are $\NN$-complete.
\end{rem}

%%%%%%%%%
\begin{example} Let $\K=\ovl\Q$ be the algebraic closure of $\Q$.
Since $\ovl\Q$ is countable we may list its elements as $\a_1$, $\a_1$, \ldots, $\a_l$, \ldots.
Let $\mathcal S$ be the system of equations:
$$P_1=0,P_l=\left(x_1-\a_{l}\right)x_{l}-1= 0 \ \ \forall l\geq 2.$$
For every integer $N\geq 1$ the vector 
$$\left(\a_N, \frac{1}{\a_N-\a_{2}}, \ldots, \frac{1}{\a_N-\a_{N-1}}\right)\in\K^{N-1}$$
is a solution of
$$P_1=\cdots= P_{N-1}=0.$$
But $\mathcal S$ has no solution. Indeed if $x=(x_1,\ldots, x_n,\ldots)\in\K^\N$ was a solution of $\mathcal S$ then we would have that
\begin{equation}\label{eq}(x_1-\a_{l})x_{l}=1\ \ \ \forall l\geq 2.\end{equation}
But $x_1\in\ovl\Q$ so $x_1=\a_{l_0}$ for some $l_0\geq 0$. Thus \eqref{eq} for $l=l_0$ would give
$$0=(x_1-\a_{l_0})x_{l_0}=1$$
which is impossible. So $\ovl\Q$ is not an $\NN$-complete field.
\end{example}
%%%%%%%%%%%%%%%%%

\begin{example}
Let $\K=\R$ be the field of real numbers. Let $\mathcal S$ be the system of equations:
$$P_1=0, P_l=x_l^2-(x_1-l)=0\ \ \forall l\geq 2.$$
Then $P_1=\cdots =P_l=0$ has a solution $x=(x_1,\ldots,x_n)$ if and only if 
$x_1-l\geq 0.$\\
In particular $\mathcal S$ has no solution. So $\R$ is not an $\NN$-complete field.
\end{example}

%%%%%%%%%%%%%%%%%%%%%%%%%%%%%%%%%%%%%%%%%%%%%
%%%%%%%%%%%%%%%%%%%%%%%%%%%%%%%%%%%%%%%%%%%%%

\section{Approximation with constraints}

We recall some elementary facts on algebraically pure field extensions, referring to \cite{P1} and \cite[(2.3)]{BNP} for details.

\begin{rem}
\begin{enumerate}

\item If $\K\lgw \L$ is a field extension of real closed fields then it is algebraically pure.
\item If $\K$ is an infinite field and $x=(x_1,\ldots,x_n)$ then $\K\lgw \K(x)$ is algebraically pure. \cite{P1}
\item If $\K$ is a field and $x=(x_1,\ldots, x_n)$, we denote by $\K\lg\!\lg x\rg\!\rg$ the field of algebraic power series, and by $\K\{\!\{x\}\!\}$ the field of convergent power series (when $\K$ is a complete valued field). Then $\K\lg\!\lg x\rg\!\rg\lgw \K\{\!\{x\}\!\}$ and $\K\{\!\{x\}\!\}\lgw \K(\!(x)\!)$ are algebraically pure by Artin approximation theorem. \cite{Ar0}
\item If $\K_1\lgw \K_2$ and $\K_2\lgw \K_3$ are algebraically pure then $\K_1\lgw \K_3$ is algebraically pure. \cite{P1}
\end{enumerate}
\end{rem}

\begin{lem}\label{lem_ultra}\cite{BNP}
Let $\K$ be a field and let $\K^*$ be an ultrapower of $\K$. Then the morphism $\K\lgw \K^*$ sending every element $a\in\K$ onto the constant sequence $(a,\ldots, a,\ldots)$ is algebraically pure. 
\end{lem}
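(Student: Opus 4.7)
The plan is to unwind the definitions directly. Let $f_1,\ldots, f_r\in\K[Y_1,\ldots,Y_m]$ be a finite system of polynomial equations with coefficients in $\K$, and suppose there is a solution $\hat y=(\hat y_1,\ldots,\hat y_m)\in(\K^*)^m$, i.e.\ $f_i(\hat y)=0$ in $\K^*$ for every $i\in[r]$. By definition of the ultrapower, each component $\hat y_j$ is the class modulo $\mathcal F$ of some sequence $(y_{j,n})_{n\in\N}\in\K^\N$.

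Next I would use the fact that the polynomials $f_i$ have coefficients in $\K$, so the ring operations in $\K^*$ are computed componentwise: the element $f_i(\hat y)\in\K^*$ is represented by the sequence $(f_i(y_{1,n},\ldots, y_{m,n}))_{n\in\N}$. The equality $f_i(\hat y)=0$ in $\K^*$ thus translates into the statement
\[ A_i:=\{n\in\N\mid f_i(y_{1,n},\ldots,y_{m,n})=0\}\in\mathcal F \qquad \forall i\in[r]. \]

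Finally, since $\mathcal F$ is a filter and we have only finitely many indices $i$, the intersection $A:=A_1\cap\cdots\cap A_r$ also belongs to $\mathcal F$. In particular $A\neq\emptyset$ (the empty set is not in any filter), so we may pick some $n_0\in A$. Then $(y_{1,n_0},\ldots,y_{m,n_0})\in\K^m$ is a common zero of $f_1,\ldots,f_r$, providing the desired solution in $\K$. Hence $\K\to \K^*$ is algebraically pure.

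There is no real obstacle here: the argument is the finite-system case of \L{}o\'s's theorem, and the essential point—finite intersections of ultrafilter (or filter) members remain in the filter—is exactly what forces the existence of a single index at which all equations are simultaneously satisfied. The hypothesis that the system is \emph{finite} is crucial, which is also why this lemma does not by itself address the countable situation treated in Section~2.
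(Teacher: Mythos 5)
Your proof is correct and follows essentially the same route as the paper's: lift the ultrapower solution to a sequence in $\K^\N$, observe that each equation holds on a set $A_i\in\mathcal F$, and use the finiteness of the system to intersect these sets inside the (ultra)filter and extract a single index $n_0$ giving a solution in $\K^m$. The only difference is cosmetic: you spell out explicitly why $f_i(\hat y)=0$ in $\K^*$ means $A_i\in\mathcal F$, which the paper merely asserts.
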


\begin{proof}
Let $\mathcal S=(P_i)_{i\in I}$ be a finite system of polynomial equations with coefficients in $\K$ in the indeterminates $Y_1,\ldots, Y_m$. Let us assume that there exists $y^*\in(\K^*)^m$ such that
$$P_i(y^*)=0\ \ \ \forall i\in I.$$
Let  $(y_n)_{n\in\N}\in(\K^m)^\N$ be a sequence whose image in $(\K^*)^m$ is $y^*$. There for every $i\in I$ there exists $\mathcal U_i\in \mathcal F$ (here $\mathcal F$ denotes the ultrafilter such that $\K^*=\K^\N/\mathcal F$) such that
$$\forall n\in U_i,\ \ P_i(y_n)=0.$$
Since $I$ is finite the intersection $\mathcal U:=\cap_{i\in I}\mathcal U_i\in\mathcal F$. Thus for every $n\in \mathcal U$ we have that
$$P_i(y_n)=0 \ \ \forall i\in I.$$
Hence $\mathcal S$ has a solution in $\K^m$. Therefore $\K\lgw \K^*$ is algebraically pure.
\end{proof}

\begin{prop}\label{prop2}
Let $\K$ be a $\NN$-complete field. Let $x=(x_1,\ldots, x_n)$,  $Y=(Y_1,\ldots, Y_m)$, $f=(f_1,\ldots,f_r)\in \K[\![x]\!][Y]^r$ and $J_i\subset [n]$, $i\in [m]$.\\
If for every $c\in\N$ there exists $y^{(c)}\in\K[\![x]\!]^m$, with $y^{(c)}_i\in \K[\![x_{J_i}]\!]$ for every $i$, such that
$$f(y^{(c)})\equiv 0\ \mbox{modulo}\ (x)^c$$
then there exists $y\in\K[\![x]\!]^m$, with $y_i\in \K[\![x_{J_i}]\!]$ for every $i$,  such that
$$f(y)=0.$$

\end{prop}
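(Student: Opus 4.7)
The plan is to encode the problem as a countable system of polynomial equations in the coefficients of the power series $y_i$, and then invoke the $\NN$-completeness hypothesis on $\K$.

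First, I would introduce a fresh indeterminate $Z_{i,\alpha}$ for each $i\in[m]$ and each $\alpha\in\N^n$ with $\mathrm{supp}(\alpha)\subset J_i$, giving a countable family. Define
$$\widetilde Y_i := \sum_{\mathrm{supp}(\alpha)\subset J_i} Z_{i,\alpha}\, x^\alpha$$
as a formal power series in $x$ over the polynomial ring $\K[Z_{i,\alpha}]$. For each $k\in[r]$ and $\beta\in\N^n$, let $P_{k,\beta}$ be the coefficient of $x^\beta$ in $f_k(\widetilde Y_1,\ldots,\widetilde Y_m)$. Since $f_k$ is a polynomial in $Y$ with coefficients in $\K[\![x]\!]$, and since the coefficient of $x^\beta$ in a product of power series only sees the terms of degree $\leq |\beta|$, each $P_{k,\beta}$ is an honest polynomial with coefficients in $\K$ that depends on only finitely many of the $Z_{i,\alpha}$ (those with $|\alpha|\leq|\beta|$). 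The set
$$\mathcal S := \{P_{k,\beta}=0 : k\in[r],\ \beta\in\N^n\}$$
is therefore a countable system of polynomial equations in countably many indeterminates over $\K$.

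Next, I would verify that every finite subsystem of $\mathcal S$ has a solution in $\K$. Such a subsystem is contained in $\{P_{k,\beta}=0 : |\beta|<c\}$ for some $c\in\N$. By hypothesis there exists $y^{(c)}\in\K[\![x]\!]^m$ with $y^{(c)}_i\in\K[\![x_{J_i}]\!]$ and $f(y^{(c)})\equiv 0\pmod{(x)^c}$. Writing $y^{(c)}_i=\sum_\alpha y^{(c)}_{i,\alpha}\,x^\alpha$, the constraint $y^{(c)}_i\in\K[\![x_{J_i}]\!]$ means $y^{(c)}_{i,\alpha}=0$ whenever $\mathrm{supp}(\alpha)\not\subset J_i$, so the substitution $Z_{i,\alpha}\mapsto y^{(c)}_{i,\alpha}$ is a valid assignment of values to our indeterminates; by construction it kills every $P_{k,\beta}$ with $|\beta|<c$.

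Finally, applying $\NN$-completeness of $\K$ yields a solution $(z_{i,\alpha})\in\K$ of the entire system $\mathcal S$. Setting
$$y_i := \sum_{\mathrm{supp}(\alpha)\subset J_i} z_{i,\alpha}\,x^\alpha \in \K[\![x_{J_i}]\!]$$
for each $i\in[m]$, the vanishing of all the $P_{k,\beta}$ at $(z_{i,\alpha})$ is exactly the statement that every coefficient of $f_k(y_1,\ldots,y_m)$ is zero, i.e.\ $f(y)=0$. There is no serious obstacle here: the key observation is that $\NN$-completeness is precisely tailored to pass from approximate solutions (which solve arbitrary finite subsystems) to an exact solution (which solves the whole system). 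The only real work is the bookkeeping to check that the coefficient-wise vanishing of $f(y)$ is expressible as a countable polynomial system over $\K$ in countably many variables.
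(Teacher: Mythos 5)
Your proposal is correct and follows essentially the same route as the paper: encode the coefficients of the constrained power series as countably many indeterminates, observe that each coefficient equation $P_{k,\beta}$ is a polynomial in finitely many of them, use the approximate solutions $y^{(c)}$ to solve every finite subsystem, and conclude by $\aleph_0$-completeness. Your write-up is in fact slightly more explicit than the paper's about why finite subsystems are solvable.
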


\begin{proof} Let us set $$B_i:=\N^{\e_{1,i}}\times\cdots \times \N^{\e_{m,i}}$$
where $\e_{k,i}=1$ if $k\in J_i$, $\e_{k,i}=0$ if $k\notin J_i$, and 
$$Y_i=\sum_{\a\in B_i}Y_{i,\a}x^\a \ \ \ \forall i=1,\ldots, m.$$
We denote by $P_{k,\b}$ the coefficient of $x^\b$ in $f_k(\sum_{\a\in B_1}Y_{1,\a}x^\a,\ldots,\sum_{\a\in B_m}Y_{m,\a}x^\a)$. 
 Let us denote by $\mathcal S$ the system of polynomial equations
\begin{equation}\label{2}P_{k,\b}=0,\ k\in[p],\ \b\in \N^n.\end{equation}
depending on the variables $Y_{i,\a}$ for $i\in[m]$ and $\a\in  B_i$.\\
Since $\K$ is a $\NN$-complete field and every finite sub-system of $\mathcal S$ has a solution,  $\mathcal S$ has a solution $(y_{i,\a})_{i\in[m],\a\in B_i}$ with coefficients in  $\K$. Thus if $y=(y_1,\ldots, y_m)$ with
 $$y_i=\sum_{\a\in B_i} y_{i,\a}x^\a$$
 then we have that $f(y)=0$.
\hfill\ \end{proof}

\begin{example}
In \cite{BDLD} two examples are given that show that this statement is no longer true without the condition of $\K$ being $\NN$-complete: the first one is a system of polynomial equations over the algebraic closure of $\mathbb F_p$ (see Example (i) p. 200 \cite{BDLD}) and the second one is an example of polynomial equations over $\Q$ (see Example (ii) p. 200 \cite{BDLD}).\\
\end{example}

%%%%%%%%%%%%%%%%%%%%%%%%%%%%%%%%%%%%%%%%%%%%%%%

\begin{thm}\label{q2} Let $\K\subset \K'$ be an algebraically pure field extension where $\K$ is $\NN$-complete. We set  $x=(x_1,\ldots,x_n)$ and $f\in \K[\![x]\!][Y]^r$, $Y=(Y_1,\ldots,Y_m)$.\\
Assume that there exists a  solution $\hat{y}\in \K'[\![x]\!]^m$ of $f=0$ such that
 $$\hat{y}_i\in  \K'[\![x_{J_i}]\!]$$
   for some subsets
 $J_i\subset [n]$, $i\in [m]$. Then there is a solution $y\in \K[\![x]\!]^m$ of $f=0$ such that
$y_i\in  \K[\![x_{J_i}]\!]$ and  $\ord (y_i)=\ord (\hat{y}_i)$, $i\in [m]$.

\end{thm}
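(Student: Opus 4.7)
The plan is to reduce the theorem to the method of proof of Proposition \ref{prop2}, enriched with extra equations that encode the required order condition, and then to exploit simultaneously the algebraic purity of the extension $\K\subset \K'$ and the $\NN$-completeness of $\K$. First I would set $d_i:=\ord(\hat{y}_i)$ for each $i\in [m]$ and, following the argument of Proposition \ref{prop2}, write $Y_i=\sum_{\alpha\in B_i}Y_{i,\alpha}x^\alpha$, where $B_i\subset\N^n$ is the set of multi-indices supported in $J_i$, and let $P_{k,\beta}$ denote the coefficient of $x^\beta$ in $f_k(Y_1,\ldots,Y_m)$; each $P_{k,\beta}$ is a polynomial in finitely many of the scalar unknowns $Y_{i,\alpha}$ with coefficients in $\K$.

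Next, for each $i$ I would pick some $\alpha_i\in B_i$ with $|\alpha_i|=d_i$ and $\hat{y}_{i,\alpha_i}\neq 0$, introduce auxiliary scalar indeterminates $Z_1,\ldots,Z_m$, and consider the countable system $\mathcal T$ over $\K$ consisting of
$$P_{k,\beta}=0\ \ (k\in[r],\,\beta\in\N^n),\ \ Y_{i,\alpha}=0\ \ (i\in[m],\,\alpha\in B_i,\,|\alpha|<d_i),\ \ Z_iY_{i,\alpha_i}-1=0\ \ (i\in[m]).$$
The first group of equations forces $f(y)=0$, the second forces $\ord(y_i)\geq d_i$, and the third forces $y_{i,\alpha_i}\neq 0$, hence $\ord(y_i)\leq d_i$; together these ensure the desired order equalities.

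The coefficients of $\hat{y}$, together with $\hat{z}_i:=\hat{y}_{i,\alpha_i}^{-1}\in\K'$, provide a solution of $\mathcal T$ in $\K'$, so in particular every finite subsystem of $\mathcal T$ has a solution in $\K'$. By algebraic purity of $\K\subset\K'$, each such finite subsystem already has a solution in $\K$; by $\NN$-completeness of $\K$, the whole system $\mathcal T$ then has a solution $(y_{i,\alpha}),(z_i)$ in $\K$. The resulting series $y_i:=\sum_{\alpha\in B_i}y_{i,\alpha}x^\alpha\in\K[\![x_{J_i}]\!]$ satisfy $f(y)=0$ and $\ord(y_i)=d_i=\ord(\hat{y}_i)$, as required.

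The only delicate point I foresee is the encoding of the order equalities $\ord(y_i)=d_i$ as polynomial equations: the lower bound is a finite set of linear equations, but the upper bound is the inequation $y_{i,\alpha_i}\neq 0$, which is why the auxiliary variables $Z_i$ are needed to turn it into a genuine polynomial equation. Once this is in place, the proof is a direct combination of the three ingredients (coefficient expansion as in Proposition \ref{prop2}, algebraic purity, and $\NN$-completeness) with no further obstruction.
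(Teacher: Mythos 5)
Your proof is correct and follows essentially the same route as the paper: expand the unknowns coefficientwise, pass the resulting countable polynomial system from $\K'$ to $\K$ via algebraic purity plus $\NN$-completeness, and enforce $\ord(y_i)\leq\ord(\hat y_i)$ by adjoining the equations $Z_iY_{i,\alpha_i}=1$. The only (immaterial) difference is that the paper obtains the lower bound $\ord(y_i)\geq\ord(\hat y_i)$ by taking $B_i$ to be the support of $\hat y_i$ rather than by adding the explicit vanishing equations $Y_{i,\alpha}=0$ for $|\alpha|<d_i$.
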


\begin{proof}
Let us write $\hat y_i=\sum_{\a\in B_i}\hat y_{i,\a}x^\a$ where 
$B_i\subset \N^n$ denotes the support of $\hat y_i$.

We have that
$$f(\hat y)=0\Longleftrightarrow f_k(\hat y  )=0 \ \ \forall k=1,\ldots, r$$
$$\Longleftrightarrow  \forall k, \ \forall \b\in\N^n \text{ the coefficient of } x^\b \text{ in } f_k(\hat y) \text{ is } 0$$
Let us denote by $P_{k,\b}$ the coefficient of  $x^\b$ in  $f_k$ after replacing each $Y_i$ by the term $\sum_{\a\in B_i}Y_{i,\a}x^\a$, and let 
 $\mathcal S$ be the system of equations
 $$P_{k,\b}=0\ \ \forall k\in\N, \ \forall \b\in\N^n $$
 in the indeterminates $Y_{i,\a}$ for $i=1,\ldots, m$ and $\a\in B_i$. Since $\mathcal S$ has a solution in $\K'$ every finite sub-system of $\mathcal S$ has a solution in $\K'$ and, since $\K\lgw \K'$ is algebraically pure,  every finite sub-system of $\mathcal S$ has a solution in $\K$. Then, since $\K$ is a $\NN$-complete field the system $\mathcal S$ has a solution $(y_{i,\a})_{i\in[m],\a\in B_i}$ with coefficients in  $\K$. This means that if $y=(y_1,\ldots, y_m)$ with
 $$y_i=\sum_{\a\in B_i} y_{i,\a}x^\a$$
 then $f(y)=0$. Since $B_i$ is the support of $\hat y_i$,the support of $y_i$ is included in the support of $\hat y_i$ for every $i$. In particular we have that $\ord(\hat y_i)\leq \ord(y_i)$ for every $i$.\\
 Now let us assume moreover  that $\ord(\hat y_i)=c_i$ and that, for every $i=1,\ldots, m$, $\hat y_{i,\a_i}\neq 0$ with $|\a_i|=c_i$ (here for $\b=(\b_1,\ldots, \b_n)$ we set $|\b|:=\b_1+\cdots+\b_n$). Then there exists, for $i=1,\ldots, m$, an element $\hat z_i\in\K'$ such that
 $$\hat y_{i,\a_i}\hat z_i=1, \ \forall i=1,\ldots, m.$$
 By adding the equations 
 \begin{equation}\label{eq}Y_{i,\a_i}Z_i=1, \ \forall i=1,\ldots, m.\end{equation}
 to the system $\mathcal S$ we can suppose that there exists $z_i\in\K$ for every $i$ such that Equations \eqref{eq} are satisfied. Thus
 $$\ord(y_i)=c_i=\ord(\hat y_i)\ \ \forall i=1,\ldots,m$$
 and the theorem is proven.
\hfill\ \end{proof}

\begin{rem}\label{r1}
By Lemmas 5.1 and 5.2 \cite{R} every system $\mathcal T$ of partial polynomial differential equations with  coefficients in $\K[\![x]\!]$ (with $x=(x_1,\ldots, x_n)$) and indeterminates $Y_1$, \ldots, $Y_m$, provides a system $\mathcal S$ of polynomial equations with  coefficients in $\K[\![x]\!][t]$ (with $t=(t_1,\ldots, t_l)$) and indeterminates $Y_1$, \ldots, $Y_m$, $Z_{1}$,\ldots, $Z_{k}$ such that $y\in\K[\![x]\!]^m$ is a solution of $\mathcal T$ if and only if there exists $z\in\K[\![x,t]\!]^k$ such that $(y,z)$ is a solution of $\mathcal S$ and $z$ satisfies some constraints conditions as in Proposition \ref{prop2}.\\
By Corollary 4.7 \cite{DL} there exists a system of partial differential equations $\mathcal T$ defined over $\ovl\Q$ having a solution whose components are  in $\C[\![x]\!]$ but no solution  whose components are in $\ovl\Q[\![x]\!]^m$. So it shows that there exists a system of polynomial equations $\mathcal S$ with coefficients in $\ovl\Q[x]$ which has no solution $y\in\ovl\Q[\![x]\!]^m$ such that $y_i\in\ovl\Q[\![x_{J_i}]\!]$ for every $i$ for some $J_i\subset [n]$, but has a solution $y'\in\C[\![x]\!]^m$ such that $y'_i\in\C[\![x_{J_i}]\!]$ for every $i$.\\
This shows that Theorem \ref{q2} is no longer true in general if  $\K$ is not $\NN$-complete.\\
Moreover since  this system $\mathcal S$ has a  solution with coefficients in $\C$ satisfying the constraints conditions and since $\ovl\Q\lgw \C$ is algebraically pure, for every $c\in\N$ there exists $y^{(c)}\in\ovl\Q[\![x]\!]^m$ (satisfying the constraints conditions) such that $f(y^{(c)})\in (x)^c$. But there is no $y\in\ovl\Q[\![x]\!]^m$ (satisfying the constraints conditions) such that $f(y)=0$. This also provides an example showing that Proposition \ref{prop2} is not true if $\k=\ovl\Q$.
 \end{rem}

%%%%%%%%%%%%%%

\begin{cor}\label{cor}
Let $\K$ be a $\NN$-complete field. Let us set   $x=(x_1,\ldots,x_n)$,  $f=(f_1,\ldots,f_r)\in \K[\![x]\!][Y]^r$, $Y=(Y_1,\ldots,Y_m)$   and  $J_i\subset [n]$, $i\in [m]$.
Then there exists a map $\nu:\N^m\to \N $ such that if $y'=(y'_1,\ldots,y'_m)$,  $y'_i\in \K[\![x_{J_i}]\!]$, $i\in [m]$ satisfies  $f(y')\equiv 0$ modulo $(x)^{\nu(c)}$ for some $c=(c_1,\ldots,c_m)\in \N^m$ and $\ord (y'_i)=c_i$, $i\in [m]$ then there exists $y_i\in \K[\![x_{J_i}]\!]$ for all $ i\in [m]$ such that $y=(y_1,\ldots,y_m)$ is a zero of $f$  and  $\ord( y_i)=c_i$ for all $i\in [m]$.
\end{cor}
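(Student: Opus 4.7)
The plan is to argue by contradiction and reduce to Theorem \ref{q2} via an ultrapower. Suppose the desired function $\nu$ does not exist. Then some $c=(c_1,\ldots,c_m)\in\N^m$ is exceptional: for every $N\in\N$ there is a tuple $y^{(N)}=(y^{(N)}_1,\ldots,y^{(N)}_m)$ with $y^{(N)}_i\in\K[\![x_{J_i}]\!]$, $\ord(y^{(N)}_i)=c_i$ and $f(y^{(N)})\in(x)^N$, yet no exact zero of $f$ in $\K[\![x]\!]^m$ satisfies the constraints together with the order conditions $\ord(y_i)=c_i$.

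Next I assemble a formal solution $\hat y\in\K^*[\![x]\!]^m$ coefficient by coefficient, where $\K^*=\K^\N/\mathcal F$ is the ultrapower of $\K$ for some fixed non-principal ultrafilter $\mathcal F$ on $\N$. Writing $y^{(N)}_i=\sum_{\a\in B_i}y^{(N)}_{i,\a}x^\a$ with $B_i\subset\N^n$ the set of multi-indices supported on $J_i$, I put $\hat y_{i,\a}:=(y^{(N)}_{i,\a})_N/\mathcal F\in\K^*$ and $\hat y_i:=\sum_{\a\in B_i}\hat y_{i,\a}x^\a\in\K^*[\![x_{J_i}]\!]$. For any $\b\in\N^n$, the coefficient of $x^\b$ in $f(y^{(N)})$ vanishes as soon as $N>|\b|$, and since cofinite subsets of $\N$ lie in $\mathcal F$, the corresponding coefficient of $f(\hat y)$ vanishes in $\K^*$; hence $f(\hat y)=0$.

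The delicate step is order preservation. For each multi-index $\a$ with $|\a|<c_i$, the coefficient $y^{(N)}_{i,\a}$ vanishes identically in $N$, so $\hat y_{i,\a}=0$, giving $\ord(\hat y_i)\geq c_i$. For each $N$ at least one $\a$ with $|\a|=c_i$ satisfies $y^{(N)}_{i,\a}\neq 0$; since the set of such $\a$ is finite, the pigeonhole property of $\mathcal F$ yields a single $\a_i$ with $|\a_i|=c_i$ such that $\{N:y^{(N)}_{i,\a_i}\neq 0\}\in\mathcal F$. Therefore $\hat y_{i,\a_i}\neq 0$ in $\K^*$ and $\ord(\hat y_i)=c_i$ exactly. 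I expect this pigeonhole argument to be the main subtlety, since it is what forces the ultrapower construction to produce a solution of the prescribed order rather than merely of order $\geq c_i$.

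To conclude, I apply Theorem \ref{q2} to the extension $\K\subset\K^*$, which is algebraically pure by Lemma \ref{lem_ultra}, with the formal solution $\hat y$ just constructed. It returns $y\in\K[\![x]\!]^m$ with $y_i\in\K[\![x_{J_i}]\!]$, $f(y)=0$ and $\ord(y_i)=\ord(\hat y_i)=c_i$, contradicting the choice of $c$. The desired function $\nu$ therefore exists, establishing the corollary.
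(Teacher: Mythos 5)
Your proposal is correct and follows essentially the same route as the paper: argue by contradiction, package the approximate solutions into an element of $\K^*[\![x]\!]^m$ via a non-principal ultrafilter, and conclude with Lemma \ref{lem_ultra} and Theorem \ref{q2}. The only difference is cosmetic: the paper passes through the ultrapower of the rings $\K[\![x_{J_i}]\!]$ modulo $\cap_q (x)^q$ (citing Lemma 3.4 of \cite{BDLD}), whereas you build the same element coefficient by coefficient and make explicit the pigeonhole argument showing $\ord(\hat y_i)=c_i$, which the paper leaves implicit.
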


\begin{proof}  Let $c$ be as above. For proof by contradiction suppose that for each $q\in \N$ there exists ${\hat y}_q\in \K[\![x]\!]^m$ with $f({\hat y})\equiv 0$ modulo $x^{q}$, ${\hat y}_{q,i}\in \K[\![x_{J_i}]\!]$, $\ord (\hat y_{q,i})=c_i$, but there exists no solution $y'$ in $\K[\![x]\!]$ with $ y'_{i}\in \K[\![x_{J_i}]\!]$, $\ord (y'_{i})=c_i$. Then let us define $y^*_i=[(y_{qi})_q]\in \K[\![x_{J_i}]\!]^*$. So we have that 
$f(y^*)\in \cap_q x^q\K[\![x]\!]^*$. Set $\bar {y}= y^*$ modulo $\cap_q x^q\K[\![x]\!]^*$ which corresponds to an element in $\K^*[\![x]\!]$ with $f(\bar{y})=0$ (see Lemma 3.4 \cite{BDLD}), $\ord (\bar {y}_i)=c_i$ and $\bar{y}_i\in \K^*[\![x_{J_i}]\!]$.  By Lemma \ref{lem_ultra} and Theorem \ref{q2} there exists $y\in \K[\![x]\!]^m$ with $f(y)=0$,
$\ord (y_i)=c_i$ and $y_i\in \K[\![x_{J_i}]\!]$.  We obtain a contradiction, so the theorem is true.
\hfill\ \end{proof}

\begin{rem} \label{r2} In  Example (iii) p. 201 \cite{BDLD} an example of a system of polynomial equations over $\C$ with constraints is given for which the following is shown: there is no $\nu\in \N$ such that if there exists $\hat y\in \C[\![x]\!]^m $ with  $f(x,\hat y)\in (x)^\nu$ with the given constraints then there exists a solution $y\in\C[\![x]\!]$ of $f=0$ with same constraints and such that $y\equiv \hat y$ modulo $(x)$.
\end{rem}
%%%%%%%%%%%%%%%%%%%%%
\section{Approximation for differential equations}

\begin{cor}\label{cor1}
Let $\K$ be a $\NN$-complete field. Let $F$ be a system of polynomial  equations in $z_1,\ldots,z_q$ and some of  their differentials $\partial^{|j_1|} z_{i_1}/\partial x^{j_1},\ldots, \partial^{|j_s|} z_{i_s}/\partial x^{j_s}$, $i_1,\ldots,i_s\in [q]$, and $j_1,\ldots,j_s\in {\bf N}^n$, with coefficients in $\K[\![x]\!]$. If $F=0$ has approximate solutions up to any order then $F=0$ has a solution with coefficients in $\K[\![x]\!]$.

\end{cor}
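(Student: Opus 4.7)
The plan is to reduce Corollary \ref{cor1} to Proposition \ref{prop2} by converting the differential system into a polynomial system with constraints, following the recipe outlined in Remark \ref{r1}.

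By Lemmas 5.1 and 5.2 of \cite{R}, the differential system $F$, with coefficients in $\K[\![x]\!]$ and indeterminates $z_1,\ldots,z_q$, can be translated into a purely polynomial system $\mathcal S$ with coefficients in $\K[\![x]\!][t]$ (for an auxiliary set of variables $t=(t_1,\ldots,t_l)$) in indeterminates $z_1,\ldots,z_q,Z_1,\ldots,Z_k$, such that $y\in\K[\![x]\!]^q$ is a solution of $F=0$ if and only if there exists $w\in\K[\![x,t]\!]^k$ with $(y,w)$ a solution of $\mathcal S$ and each $w_i$ belonging to a prescribed subring $\K[\![(x,t)_{J_i}]\!]$, $J_i\subset [n+l]$. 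Since the original $y$-components depend only on $x$, they carry the constraint $J=[n]\subset [n+l]$; the interesting constraints are the ones on $w$.

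Next, I transfer the approximation hypothesis through this translation. For every $c\in\N$ let $y^{(c)}\in\K[\![x]\!]^q$ be an approximate solution of $F=0$ modulo $(x)^c$. The reduction of \cite{R} prescribes the auxiliary series $w^{(c)}\in\K[\![x,t]\!]^k$ explicitly in terms of truncated Taylor data of $y^{(c)}$; by construction these series live in the prescribed subrings $\K[\![(x,t)_{J_i}]\!]$, and $(y^{(c)},w^{(c)})$ satisfies $\mathcal S$ modulo $(x,t)^c$. Setting $\tilde x:=(x,t)$, I can then apply Proposition \ref{prop2} with ambient variables $\tilde x$, indeterminates $(y,w)$, and constraint sets $[n]$ and $J_1,\ldots,J_k$ respectively; because $\K$ is $\NN$-complete, this produces an exact solution $(y,w)\in\K[\![x]\!]^q\times\K[\![x,t]\!]^k$ of $\mathcal S$ satisfying the constraints. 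By the equivalence of the first step, $y$ is a solution of $F=0$ in $\K[\![x]\!]^q$, as required.

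The main technical point is the first step: verifying that the approximation-order of $y^{(c)}$ for the differential system transfers to the required approximation-order for $(y^{(c)},w^{(c)})$ of $\mathcal S$, and that the constraint pattern is preserved. This is exactly the content of \cite[Lemmas 5.1, 5.2]{R}; once it is granted, the invocation of Proposition \ref{prop2} is immediate, and the rest of the argument is bookkeeping.
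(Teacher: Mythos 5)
Your proposal is correct and follows essentially the same route as the paper: translate the differential system into a polynomial system with constraints via Lemmas 5.1 and 5.2 of \cite{R}, check that approximate solutions transfer through this translation, and then invoke Proposition \ref{prop2}. The paper's own proof is just a terser version of this same argument.
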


\begin{proof}
Exactly as in Remark \ref{r1}, Lemmas 5.1 and 5.2 \cite{R} show that for such a system $F=0$ there is a system of polynomial equations $G=0$ with  coefficients in $\K[\![x]\!][t]$ (with $t=(t_1,\ldots, t_l)$) and indeterminates $Y_1$, \ldots, $Y_m$, $Z_{1}$,\ldots, $Z_{k}$ such that $y\in\K[\![x]\!]^m$ is a solution of $F=0$ if and only if there is $z\in\K[\![x,t]\!]^k$ such that $(y,z)$ is a solution of $G=0$ with constraints.\\
Moreover $y\in\K[\![x]\!]^m$ is an approximate  solution of $F=0$ up to order $c$ if and only if there is $z\in\K[\![x,t]\!]^k$ such that $(y,z)$ is an approximate solution of $G=0$ up to degree $c$ with constraints. This shows that Proposition \ref{prop2} implies Corollary \ref{cor1}.
\hfill\ \end{proof}

\begin{rem}
This theorem has been proven in \cite{DL} in the case of a single indeterminate $x$ under some different hypothesis on $\K$, namely $\K$ has to be a characteristic zero field which is either algebraically closed, a real closed field or a Henselian valued field. Still in \cite{DL} they remark that this theorem is quite easy to prove when $\K=\C$. \\
Again in \cite{DL} is given an example of a system of partial differential equations with coefficients in $\R[\![x_1,\ldots,x_n]\!]$ for $n\geq 2$ having approximate solution up to any degree, but no exact solution (see Corollary 4.10 \cite{DL}). And Corollary 4.7 \cite{DL} provides an analogous example in the case where $\K=\ovl\Q$. These examples show that the univariate case and the case of several variables $x$ are different.
\end{rem}

\begin{cor}\label{cor2}
Let $\K$ be a $\NN$-complete field. Let $F$ be a system of differential equations in $z_1,\ldots,z_q$ and some of  their differentials $\partial^{|j_1|} z_{i_1}/\partial x^{j_1},\ldots, \partial^{|j_s|} z_{i_s}/\partial x^{j_s}$, $i_1,\ldots,i_s\in [q]$, and $j_1,\ldots,j_s\in {\bf N}^n$ with coefficients in $\K[\![x]\!]$. Then  there exists a map $\tau:{\bf N}^{q+s}\to {\bf N} $ such that if $z'=(z'_1,\ldots,z'_q)$,   satisfies  $$F(z',\partial^{|j_1|} z'_{i_1}/\partial x^{j_1},\ldots,
\partial^{|j_s|} z_{i_s}/\partial x^{j_s})\equiv 0 \ \mbox{modulo}\  (x)^{\tau(c)}$$
 for some $c=(c_1,\ldots,c_q,c_{i_1,j_1},\ldots,c_{i_s,j_s})\in {\bf N}^{q+s}$ and $\ord (z'_i)=c_i$, $i\in [q]$, 
  $$\ord \left(\frac{\partial^{|j_k|} z'_{i_k}}{\partial x^{j_k}}\right)=c_{i_k,j_k},$$
   $k\in [s]$ then there exists  $z=(z_1,\ldots,z_q)\in \K[\![x]\!]^q$    a solution of $F$  together with its corresponding differentials such that  $\ord( z_i)=c_i$ for all $i\in [q]$ and  
   $$\ord \left(\frac{\partial^{|j_k|} z_{i_k}}{\partial x^{j_k}}\right)=c_{i_k,j_k}, \ \ k\in [s].$$
\end{cor}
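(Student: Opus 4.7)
The plan is to reduce the differential system $F=0$ to a polynomial system with constraints and then invoke Corollary \ref{cor}. As in the proof of Corollary \ref{cor1}, Lemmas 5.1 and 5.2 of \cite{R} associate to $F=0$ a polynomial system $G=0$ with coefficients in $\K[\![x]\!][t]$, with $t=(t_1,\ldots,t_l)$, in new indeterminates $(Y,W)=(Y_1,\ldots,Y_m,W_1,\ldots,W_k)$, together with a prescribed family of subsets $J_i$ of the variables $(x,t)$, in such a way that $z\in\K[\![x]\!]^q$ is a solution of $F=0$ if and only if there exists $w\in\K[\![x,t]\!]^k$ with $(y,w)$ a solution of $G=0$ satisfying the constraints, where the first components of $y$ encode $z_1,\ldots,z_q$ and the remaining ones encode each differential $\partial^{|j_k|}z_{i_k}/\partial x^{j_k}$ appearing in $F$. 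In particular the components of $y$ corresponding to $z$ and its differentials belong to $\K[\![x]\!]$ by construction, so their orders in $x$ and in $(x,t)$ coincide.

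First I would check that the correspondence is compatible with approximation and with orders. If $z'\in\K[\![x]\!]^q$ satisfies $F(z',\ldots)\equiv 0$ modulo $(x)^N$ with the prescribed orders $c_i$ and $c_{i_k,j_k}$, then the companion pair $(y',w')$ produced by the reduction satisfies $G(y',w')\equiv 0$ modulo $(x,t)^{\varphi(N)}$ for some explicit function $\varphi$ depending only on $F$ and on the exponents $j_1,\ldots,j_s$, and the orders of the relevant components of $y'$ in $(x,t)$ equal exactly $c_i$ and $c_{i_k,j_k}$.

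Next I would apply Corollary \ref{cor} to $G=0$ with the constraints $J_i$, working in the variable set $(x,t)$ in place of $x$, and imposing on the components of $(Y,W)$ corresponding to $z$ and its differentials the orders $c=(c_1,\ldots,c_q,c_{i_1,j_1},\ldots,c_{i_s,j_s})$. This yields a function $\nu$ such that every constrained approximate solution of $G=0$ modulo $(x,t)^{\nu(c)}$ with the prescribed orders lifts to an exact constrained solution with the same orders. Defining $\tau(c)$ as the smallest $N$ with $\varphi(N)\geq \nu(c)$ gives the map required by the statement: translating the exact solution of $G=0$ back through the reduction of \cite{R} produces an exact solution $z\in\K[\![x]\!]^q$ of $F=0$ whose components and relevant differentials have the prescribed orders.

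The main obstacle is the bookkeeping in the reduction: one must verify that the components of the auxiliary solution $(y',w')$ that encode $z'$ and its differentials really do belong to $\K[\![x]\!]$, so that their orders in $(x,t)$ coincide with their orders in $x$, and that the extra polynomial equations introduced to enforce the Leibniz and chain rules do not disturb the prescribed orders. Once this bookkeeping is carried out, the reduction transports the problem verbatim into the setting of Corollary \ref{cor} and the conclusion follows.
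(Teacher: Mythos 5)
Your proposal follows essentially the same route as the paper: reduce $F=0$ to a polynomial system with constraints via Lemmas 5.1 and 5.2 of \cite{R} (as in Corollary \ref{cor1}), identify the components of the new unknowns encoding $z_1,\ldots,z_q$ and their differentials, and apply Corollary \ref{cor} to obtain $\tau$. Your version spells out the bookkeeping (the comparison of moduli and of orders under the reduction) that the paper leaves implicit, but the argument is the same.
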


\begin{proof} Let $f\in \K[\![x]\!][Y]^r$, $Y=(Y_1,\ldots,Y_m)$, $m>q+s$ be the transformation of $F$ in an algebraic system of equations with constraints as done in the proof of Corollary \ref{cor1}. Assume that $z_i$ corresponds to $Y_i$ and $\partial^{|j_k|} z_{i_k}/\partial x^{j_k} $ corresponds to $Y_{q+k}$. Then applying Corollary \ref{cor}  to $f$ we get a function $\tau:{\bf N}^{q+s}\to {\bf N}$ which works also in our case $F$.
\hfill\ \end{proof}

{\bf Acknowledgements:} We thank the referee of their relevant and helpful comments.\\
This work has been partially elaborated in the frame of the International
Research Network ECO-Math.

\end{document}